\theoremstyle{plain}
\newtheorem{theorem}{Theorem}[section]
\newtheorem{proposition}[theorem]{Proposition}
\newtheorem{lemma}[theorem]{Lemma}
\newtheorem{claim}[theorem]{Claim}
\theoremstyle{remark}
\newtheorem{remark}{Remark}
\theoremstyle{definition}
\newtheorem{definition}{Definition}[section]
\title{Cutoff for East models}
\author{Concetta Campailla\thanks{Sapienza Universit\`a di Roma, Dipartimento di Matematica, Roma, Italy; mariaconcetta.campailla@uniroma1.it}
\and
Fabio Martinelli\thanks{ Universit\`a Roma Tre, Dipartimento di Matematica e Fisica, Roma, Italy; fabio.martinelli@uniroma3.it}}
\begin{document}

\maketitle

\begin{abstract}
We consider the East model in $\mathbb Z^d$, an example of a kinetically constrained interacting particle system with oriented constraints, together with one of its natural variant. Under any ergodic boundary condition it is known that the mixing time of the chain in a box of side $L$ is $\Theta(L)$ for any $d\ge 1$. Moreover, with minimal boundary conditions and at low temperature, i.e. low equilibrium density of the facilitating vertices, the chain exhibits cutoff around the mixing time of the $d=1$ case. Here we extend this result to high equilibrium density of the facilitating vertices. As in the low density case, the key tool is to prove that the speed of  infection propagation in the $(1,1,\dots,1)$ direction is larger than $d$ $\times$ the same speed along a coordinate direction. By borrowing a technique from first passage percolation, the proof links the result to the precise value of the critical probability of oriented (bond or site) percolation in $\mathbb Z^d$. 
\end{abstract}

\section{Introduction}
The East model (see \cite{HTbook} and references therein) is a reversible interacting particle system with kinetic constraints on $\mathbb Z^d$, evolving as follows. Call a vertex $x$ infected if its state is $"0"$ and healthy if $"1"$. At rate one and \emph{iff} at least one of the neighbors ``behind" $x$ is infected, the state of each vertex $x$ is resampled and set to healthy with probability $p\in (0,1)$ and infected with probability $1-p$. Here ``behind" means of the form $x-\vec e_i$ for some standard basis vector $\vec e_i$. A natural variant of the process is obtained by taking the rate of resampling proportional to the number of infected neighbors behind $x$. 

Kinetically constrained interacting particle systems are not attractive and for this reason rigorous results for their out-of-equilibrium evolution are very scarce, particularly when $1-p$ is small and/or $d\ge 2$. We refer the reader to \cite[Chapter 7]{HTbook} and references therein. The East process is a notable exception and, in particular, the cutoff phenomenon (see Definition \ref{def:cutoff} and e.g. \cite[Ch.18]{LPW}) has been proved in two different settings: a) $d=1$ and $p\in (0,1)$ in \cite{GLM}, and b) $d\ge 2$ and $1-p$ small enough in \cite{CM}. The only other kinetically constrained model for which cutoff has been proved is the one-dimensional Fredrickson-Andersen one spin facilitated model with $p$ sufficiently small \cite{Ertul}. 

Proving cutoff can be seen as a step towards the more ambitious goal of establishing a limit shape result as $t\to \infty$ for the set of vertices which have been infected within time $t$ starting with e.g. only  a single infection at the origin.

The main contribution of this note is to establish the cutoff phenomenon for any $d\ge 2$ and $p$ small enough. For $p=0$ the East and Modified East chains are closely related to oriented first passage percolation\footnote{In the site case, first-passage percolation is also known as the Richardson model.}, and  it is therefore not surprising that the proof relies on precise bounds of first passage times. 

The paper is organized as follows. In Sections \ref{sec:models} and \ref{sec:results} we define precisely the models and state the main result. In Section \ref{sec:infinite T} we analyse infection times for $p=0,$ while in Section \ref{sec:high T} we extend the analysis to $p>0$ small enough. Finally in Section \ref{sec:proof} we prove the cutoff result and  in the appendix we   discuss a technical result concerning oriented percolation in $\mathbb Z^d, d\ge 2$.   
\subsection{Notation} 
For $n\in \mathbb{N}\backslash \{0\}$, we write $[n]:=\{1,\,\dots,n\}$. Let $\mathbb R_+^d=\{x=(x_1,\,\dots,\,x_d) \in \mathbb R^d: x_i\geq 0 \, \forall i \in [d]\}$ and $\mathbb Z_+^d=\{x=(x_1,\,\dots,\,x_d) \in \mathbb Z^d: x_i\geq 0\, \forall i \in [d]\}$. For any $k,L \in \mathbb N$  we will write $H_k=\{x\in \mathbb Z^d:\ \sum_{i=1}^d x_i=k\}$, and $\Lambda_L=\{0,1,\dots,L\}^d$.  The collection
 $\mathcal{B}=\{\vec e_1,\vec e_2,...,\vec e_d\}$ will denote the canonical basis of $\mathbb R^d$ and $\|x-y\|_1$ the $\ell_1$-distance between $x,y$. We will say that $x$ \emph{precedes} $y$ and write $x\prec y,$ iff $x_i\leq y_i$ $\forall i \in [d]$. We will then define the \emph{update  neighborhood} of a vertex $x$ as the set   
 \[
\mathcal{U}_x=\{y\prec x: \|x-y\|_1=1\}.
\] 
For any $\Lambda\subset \mathbb Z^d_+$ we will write $\big(\Omega_{\Lambda},\pi_\Lambda\big)$ for the probability space $\{0,1\}^{\Lambda}$ equipped with the product measure $\pi_\Lambda=\otimes_{x\in \Lambda}\pi_x$, where each $\pi_x$ is the same Bernoulli measure with parameter $p\in (0,1)$. For any $\omega\in \Omega_\Lambda$ we will write $\omega(x) \in \{0,1\}$ for the state at $x \in \Lambda$ of the
 configuration $\omega \in \Omega_\Lambda$ and we will say that $x$ is infected in $\omega$ if $\omega(x)=0$, and healthy otherwise. Whenever the configuration $\omega$ is clear from the context we will just say that $x$ is infected or healthy.

Finally, in order to properly define boundary conditions for our processes, it will be convenient to adopt the following notation. Given $\Lambda\subset \mathbb Z_+^d$, $\omega\in \Omega_\Lambda$, and $\sigma\in \Omega_{\mathbb Z_+^d\setminus \Lambda}$, we will write $\omega\otimes\sigma$ for the element of $\{0,1\}^{\mathbb Z^d}$ (i.e. a configuration of infected and healthy vertices on the \emph{whole} lattice $\mathbb Z^d$) such that
\[
(\omega\otimes\sigma) (x)=
\begin{cases}
    \omega(x) & \text{ if $x\in \Lambda$}\\
    \sigma(x) &  \text{ if $x\in \mathbb Z_+^d\setminus \Lambda$}\\
    1 &  \text{ if $x\in \mathbb Z^d\setminus \mathbb Z^d_+$},
\end{cases}
\]
and $1_{x,i}^\sigma(\omega)=1-(\omega\otimes \sigma)(x-\vec e_i)$ for the indicator  function of the event that $(\omega\otimes \sigma)(x-\vec e_i)=0, i=1,\dots , d.$ We emphasize that outside $\mathbb Z^d_+$ the configuration $\omega\otimes \sigma$ has \emph{no infection} for all choices of $\omega,\sigma$. 

\subsection{The East and Modified East models}
\label{sec:models}
Given $\Lambda\subset \mathbb Z_+^d,$ $\omega\in \Omega_{\Lambda}$, and $\sigma\in \Omega_{\mathbb Z_+^d\setminus \Lambda},$ the processes of interest are  interacting particle systems on $\Lambda$, reversible w.r.t. $\pi_\Lambda,$ and evolving under the boundary condition $\sigma$ as follows. Suppose that the current configuration is $\omega$. Each vertex $x\in \Lambda$, with a (uniformly bounded) rate $c^{\sigma}_x(\omega)$ depending only on the restriction of $\omega\otimes \sigma$ to the update  neighbourhood $\mathcal {U}_x$, resamples its current value $\omega(x)$ to a new value $\omega(x)^{\text{new}}\sim \pi_x.$ The key feature, shared by both processes, is the fact that, for $x\neq 0$, the updating rate $c^\sigma_x(\omega)$ depends only on the number of infections of $\omega\otimes \sigma$ inside $\mathcal U_x$ and it vanishes iff no infection is present. If the origin belongs to $\Lambda$ then its updating rate $c^\sigma_0(\omega)$ is set equal to one no matter $\omega,\sigma$.  The latter assumption, sometimes referred to as \emph{minimal boundary condition}, is necessary in order to guarantee ergodicity in the relevant cases, e.g. when $\Lambda=\Lambda_L$.   
\begin{remark}
In the physical models of glassy dynamics based on the East processes (see \cite{JE,HTbook}), the parameter $p$ is related to the inverse temperature $\beta$ through the relation $q:=1-p= \frac{1}{1 + e^{\beta}}$. Hence the low temperature regimes correspond to the low equilibrium density of infections.  
\end{remark}
The Markov generator of the processes in $\Lambda$ with boundary condition $\sigma$ takes the form 
\begin{equation*}
    \ensuremath{\mathcal{L}}^\sigma_{\Lambda} f(\omega)=\sum_{x\in \Lambda}c^\sigma_x(\omega)\big(\pi_x(f)(\omega)-f(\omega)\big), \quad f:\Omega_{\Lambda}\mapsto \mathbb R,
\end{equation*}
where $\pi_x(f)(\omega)$ denotes the average w.r.t. $\omega(x)\sim \pi_x$ of the function $f(\omega)$. 
It is easy to verify that $ \mathcal L^\sigma_\Lambda$ is a well defined self-adjoint operator on $L^2(\Omega_{\Lambda},\pi_{\Lambda})$ and that, when e.g. $\Lambda=\Lambda_L$, it is also  ergodic with a positive spectral gap (we refer to e.g. \cite[Lemma 12.1]{LPW}).  
In this work we make two natural choices for the updating rate 
$c^\sigma_x(\omega)$. The choice 
\begin{align*}
    c^{\sigma}_x(\omega)=
\begin{cases}
    \max_{i \in [d] } 1^\sigma_{x,i}(\omega) & \text{ if $x\in \Lambda$ and $x\neq 0$}\\
    1 & \text{ if $0\in \Lambda$ and $x=0$, } 
\end{cases}  
\end{align*}
defines the East model while 
\[
c^{\sigma}_x(\omega)=
\begin{cases}
    \sum_{i=1}^d 1^\sigma_{x,i}(\omega) & \text{ if $x\in \Lambda$ and $x\neq 0$}\\
    1 & \text{ if $0\in \Lambda$ and $x=0$, } 
\end{cases}  
\]
defines the Modified East model. By construction the two processes coincide for $d=1.$ 

Both processes enjoy the usual graphical construction.  For the East process one attaches to each \emph{vertex} of $\Lambda$ a rate one Poisson clock. The clocks are independent across $\Lambda$ and, at each ring of the clock at $x \in \Lambda$, the process checks the number of infections in $\mathcal U_x$. If this number is positive or if $x$ is the origin then $\omega(x)$ is resampled as described above. Otherwise nothing happens. For the Modified East process one proceeds similarly. A rate one Poisson clock is attached to each \emph{positively oriented edge} of $\mathbb Z^d_+$, i.e. edges $\vec e=(e_-,e_+)$ with $e_-$ preceding $e_+$, and to the origin. When the clock of an edge $\vec e$ with head $e_+\in \Lambda$ rings, then the state of $e_+$ is updated as before iff the tail  $e_-$ is infected. As for the East process, if $\Lambda\ni 0$ at each ring of the clock at the origin the state of the origin is updated according to $\pi_0$. 
\begin{remark}
\label{rem:1}
Using the graphical construction and the orientation of the updating rates $c_x^\sigma$, one verifies immediately that the restriction to the box $\Lambda_L$ of the process in $\mathbb Z^d_+$ coincides with the process in $\Lambda_L$. In this case we don't need to specify the boundary condition $\sigma$ in $\mathbb Z^d_+\setminus \Lambda_L$ because $\mathcal U_x\cap (  \mathbb Z^d_+\setminus \Lambda_L)=\emptyset\ \forall x\in \Lambda_L$.  Moreover, the restriction of both the modified and non-modified processes to $\{x\in \mathbb Z^d_+: x_d=0\}$ coincides with the process on $\mathbb Z^{d-1}_+$.   
\end{remark}
The law of the East and Modified East
processes  with initial condition $\eta$ will be denoted by $\mathbb P^s_\eta(\cdot)$ and $\mathbb P_\eta^{b}(\cdot)$ respectively.  The superscripts $\{s,b\}$ stand for site/bond and they remind us where the Poisson clocks of the graphical construction  are attached to.  

\subsection{Main result}
\label{sec:results}
Consider both processes in the box $\Lambda_L$. They are continuous time ergodic Markov chains, reversible w.r.t. the same product measure $\pi_{\Lambda_L}$. For $\star\in \{s,b\}$ we write 
\[
d^\star_L(t)=\max_{\eta\in \Omega_{\Lambda_L}}\|\mathbb P^\star_\eta(\omega_t=\cdot)-\pi_{\Lambda_L}\|_{\text{TV}},
\]
and $T_{\text{mix}}^\star(L;d)=\inf\{t>0:\ d_L^\star(t)\le 1/4\}$ for the corresponding mixing time (see e.g. \cite[Section 4.5]{LPW}). It is easy to check that $\lim_{L\to \infty}T_{\text{mix}}^\star(L;d)=+\infty$.  
Next we recall the definition of the cutoff phenomenon 
(see e.g. \cite[Ch.18]{LPW} and references therein).
\begin{definition}
\label{def:cutoff}
We say that the chain corresponding to $\star$ exhibits cutoff around $T_{\text{mix}}^\star(L;d)$ with cutoff window $w^\star(L)=o\big( T_{\text{mix}}^\star(L;d)\big)$ if the following occurs:
\begin{align}
\label{eq:cutoff1}    \lim_{\alpha\to \infty}\liminf_{L\to +\infty}d_L^\star\big(T_{\text{mix}}^\star(L;d)-\alpha w^\star(L)\big)&=1,\\
\label{eq:cutoff2}  \lim_{\alpha\to \infty}\limsup_{L\to +\infty}d_L^\star\big(T_{\text{mix}}^\star(L;d)+\alpha w^\star(L)\big)&=0.
\end{align}
\end{definition}
When $d=1$ the two chains actually coincide and it was proved in \cite[Theorem 1.2]{GLM} (see also \cite{Blondel}) that for all $p\in (0,1)$ there exists a positive finite constant $\rho$  such that 
\begin{equation}
   \label{eq:cutoff1d}
   T_{\text{mix}}^\star(L;1)=\rho L(1+o(1))\quad \text{as $L\to \infty$,} 
\end{equation}
with $o(1)=\Theta(1/\sqrt{L})$. Moreover, $\rho=1+O(p)$ as $p\to 0$ and the chain exhibits \emph{cutoff} around $T_{\text{mix}}^\star(L;1)$ with cutoff window $w^\star(L)=\sqrt{L}$.

In order to state the cutoff result in higher dimensions we need to introduce the following parameter. 
\begin{definition}
  Consider standard oriented (or directed) bond and site percolation in $\mathbb Z_+^d$ with parameter $p\in (0,1)$ (see e.g.  \cite{Durrett, Hinrichsen, Liggettbook} and references therein). For $d\ge 2$ let  $p^{o,b}_c,p_c^{o,s}$ be the corresponding critical percolation thresholds and set
\begin{equation}
\label{eq:0}
\beta_c^\star(d)=1+\frac{(1-p_c^{o,\star})\log(1-p_c^{o,\star})}{p_c^{o,\star}}, \qquad \star\in \{b,s\} .   
\end{equation}  
\end{definition}
In the sequel, whenever the dimension $d$ is clear from the context we will simply write $\beta_c^\star$.  
The connection between $\beta_c^\star$ and our processes will appear clear in the proof of Proposition \ref{prop:1}.  With this notation our main result reads as follows.
\begin{theorem}
        \label{thm:1}
Fix $d\ge 2$ and suppose that  $d'\beta_c^\star(d') <1$ for all $2\le d'\le d$. Then for all $p$ sufficiently small 
\begin{equation}
   \label{eq:cutoff-high-d}
   T_{\text{mix}}^\star (L;d)=\rho L(1+o(1))\quad \text{as $L\to \infty$,} 
\end{equation}
where $\rho$ is the constant from the one-dimensional case, defined in Equation \eqref{eq:cutoff1d}.
Moreover, the chain exhibits cutoff around $T_{\text{mix}}^\star (L;d)$ with cutoff window $w^\star(L)=L^{2/3}$. 
\end{theorem}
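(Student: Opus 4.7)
The plan is to prove the matching lower and upper bounds in \eqref{eq:cutoff-high-d} together with the cutoff statement, splitting the argument according to the two halves of Definition \ref{def:cutoff}.

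\textbf{Lower bound, \eqref{eq:cutoff1}.} By the last sentence of Remark \ref{rem:1}, the restriction of the dynamics in $\Lambda_L$ to the coordinate axis $A_1=\{(k,0,\dots,0):\ 0\le k\le L\}$ is autonomous and coincides with the $d=1$ chain on an interval of length $L$ (iterate the remark $d-1$ times). Since the marginal on $A_1$ of $\pi_{\Lambda_L}$ is the $d=1$ equilibrium and projecting onto a marginal can only decrease total variation,
\[
d_L^\star(t)\ \ge\ d_L^{\star,1D}(t)\qquad \forall\,t\ge 0,
\]
so the $d=1$ cutoff result \eqref{eq:cutoff1d}, whose window $\sqrt{L}$ is $\ll L^{2/3}$, immediately yields \eqref{eq:cutoff1}.

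\textbf{Upper bound, \eqref{eq:cutoff2}.} Denote by $\tau_x(\eta)$ the first ``successful'' update time at $x\in\Lambda_L$ in the graphical construction from $\eta$, i.e.\ the first Poisson ring at $x$ whose update neighbourhood contains an infection. I would combine two estimates, uniform in $\eta$. First, using the infection-time bounds of Sections \ref{sec:infinite T}--\ref{sec:high T} together with the hypothesis $d'\beta_c^\star(d')<1$ for $2\le d'\le d$, the diagonal speed of infection propagation is \emph{strictly} larger than $d$ times the coordinate-axis speed $1/\rho$, so that for some $c_1>0$ and every $x\in\Lambda_L$ with $\min_i x_i\ge L^{2/3}$,
\[
\mathbb{P}\!\left(\tau_x>\rho L-c_1 L^{2/3}\right)\ \le\ L^{-A}\qquad (\forall A>0),
\]
the point being that in this regime the mean of $\tau_x$ is at most $(1-c)\rho L$ for some $c>0$, and standard concentration for directed first-passage times easily handles deviations of size $L^{2/3}$. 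Second, for $x$ within $\ell^\infty$-distance $L^{2/3}$ of some coordinate axis, $\tau_x$ is dominated by a one-dimensional infection time of length $\le L$ plus a short transverse traverse, so that the quantitative tail bounds underlying \eqref{eq:cutoff1d} give
\[
\mathbb{P}\!\left(\tau_x>\rho L+\alpha L^{2/3}\right)\ \longrightarrow\ 0
\]
fast enough in $\alpha$ and $L$ to survive a union bound over the $O(L^d)$ sites of $\Lambda_L$. Once every site has been successfully updated, a monotone-coupling / block-dynamics argument in the spirit of \cite{CM,GLM} converts the uniform bound on $\max_{x\in\Lambda_L}\tau_x$ into \eqref{eq:cutoff2}: a further $O(\log L)$ of (essentially) unconstrained evolution relaxes the joint law to $\pi_{\Lambda_L}$ via the uniform positive spectral gap on finite blocks.

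\textbf{Main obstacle.} The hardest step is upgrading the qualitative \emph{mean} advantage $v_{\text{diag}}>d\cdot v_{\text{coord}}$ (itself the central point of Sections \ref{sec:infinite T}--\ref{sec:high T} and the place where $\beta_c^\star$ enters the analysis through a first-passage-percolation calculation) into a \emph{high-probability} margin of order $L^{2/3}$, uniformly in $x$ so as to survive the union bound. The required concentration estimates must also be robust to the perturbation from $p=0$ to $0<p\ll 1$, which is why the sharpness of the oriented-percolation threshold appears. This is also why the hypothesis $d'\beta_c^\star(d')<1$ is imposed on \emph{every} $2\le d'\le d$ rather than just at $d'=d$: the argument peels off one coordinate at a time via restriction to faces, each reduction requiring the corresponding lower-dimensional diagonal advantage to be strict.
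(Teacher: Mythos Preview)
Your lower bound is exactly the paper's: project onto a coordinate axis and invoke the one-dimensional cutoff. Good.

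The upper bound sketch has a genuine gap. Your dichotomy ``$\min_i x_i\ge L^{2/3}$'' versus ``within $\ell^\infty$-distance $L^{2/3}$ of a coordinate axis'' does \emph{not} partition $\Lambda_L$ once $d\ge 3$: a vertex such as $x=(L,L,\tfrac12 L^{2/3})$ has $\min_i x_i<L^{2/3}$ yet sits at $\ell^\infty$-distance $L$ from every axis. Moreover, even inside your first set the claim ``the mean of $\tau_x$ is at most $(1-c)\rho L$'' is false: for $x=(L,L^{2/3},\dots,L^{2/3})$ the diagonal shortcut saves only $O(L^{2/3})$, not a positive fraction of $\rho L$. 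What the paper actually does (Proposition \ref{prop:2}) is the inductive peel-off you allude to only in your last paragraph: assuming $x_1\ge\cdots\ge x_d>0$, set $\phi(x)=x-x_d e^*$, bound $\tau(\phi(x))$ by the $(d-1)$-dimensional induction hypothesis (this is where the assumption for every $d'\le d$ is used), and then cover the remaining segment $\phi(x)\to x$ with the diagonal estimate \eqref{eq:4}. The output is the uniform bound $\mathbb P_\eta^\star\big(\tau(x)\ge \rho\|x\|_\infty+d(\|x\|_\infty\vee L)^{2/3}\big)\le d e^{-L^\varepsilon}$, which is what survives the union bound. Your sketch gestures at this induction but the concrete decomposition you wrote down does not implement it.

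The closing step also needs more than ``monotone coupling'': the East chain is not attractive, so no such coupling is available. The paper instead shows that by time $T_L+\tfrac14 L^{2/3}$ the chain has entered the set $\hat\Omega_L$ of configurations with an infection in every axis-parallel interval of length $\lfloor\log(L)^4\rfloor$ (this uses Proposition \ref{prop:2} plus \cite[Theorem 2.2]{Mareche}), and then quotes \cite[Lemma 5.5]{CFM} to get that starting from $\hat\Omega_L$ the chain mixes in time $o(\log(L)^5)$. Your $O(\log L)$ relaxation via ``uniform positive spectral gap on finite blocks'' is on the right track conceptually, but you should name the intermediate set and the specific result that handles the non-monotone coupling.
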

\begin{remark}\ 
\begin{enumerate}
    \item In the low infection density regime, $q=1-p\ll 1,$ the same theorem  was proved in \cite{CM}  by showing that infection propagates much faster along the $e^*=\sum_{i=1}^d \vec e_i$ direction than along a coordinate direction $\vec e_i$, for any $i \in [d]$. For this purpose \cite{CM} proved that the speed of propagation in the direction $e^*$ is approximately the inverse of the relaxation time of the process in the \emph{full lattice} $\mathbb Z^d$. Using the fact that the projection of the process onto a coordinate direction coincides with the one-dimensional process, the proof was clinched using the basic result of  \cite{CFM2} stating that the relaxation time in $\mathbb Z^d$ is approximately the $d^{th}$-root of that in $\mathbb Z$. 
    \item We stress that here and in \cite{CM} the choice of the geometry of the box $\Lambda_L$ and the fact that only the origin in unconstrained are key inputs as they allow to connect the problem of cutoff in $d$-dimensions to the well studied  one-dimensional case. If for example one declares unconstrained all vertices along the coordinate axes, then proving cutoff in $\Lambda_L$ would require proving the existence of an asymptotic speed of infection propagation in $\mathbb Z^d$, a quite challenging goal.
    \item There are other natural graphs, e.g. the honeycomb, triangular, and Kagom\'e lattices, for which the critical values of oriented percolation have been thoroughly studied \cite{Jensen} and with a natural definition of the East and Modified East processes. Our analysis could be easily adapted to deal with these cases.   
\end{enumerate}
\end{remark}
\subsubsection{On the validity of the condition \texorpdfstring{$d\beta_c^\star(d) <1$}{d beta_c*(d) <1}}

In the bond case, $\star=b$, we use the rigorous bound on $p_c^{o,b}$ of \cite{Liggett} for $d=2$, of \cite[Theorem 2]{Gomes} for $d=4$, and of \cite[Section 2]{CoxDurrett} or of \cite[Theorem 2]{Gomes} for $d\ge 6$ to verify that $d\beta_c^b(d)<1$. For $d=3,5$ the rigorous bounds in the above works are, unfortunately, not sharp enough. However, the precise numerical estimates  of \cite{Monte Carlo} suggest that $d\beta_c^b(d)<1$ holds also in these two cases. 

In the site case, $\star=s$,
there are few rigorous upper bounds of $p_c^{o,s}$ for site oriented percolation \cite{Durrett2, Stacey1, Stacey,Gomes,Fletcher} which, unfortunately, are not sharp enough for our purpose. However, the numerical estimates for $p_c^{o,s}$ in \cite{Jensen,Monte Carlo} suggest the validity of the condition $d\beta_c^s(d)<1$ for $d=2,\dots,8$. 
We also observe that in \cite[Theorem 3 combined with the first paragraph of the proof of Theorem 5]{Schindler} it has been proved that $p_c^{o,s}(d)\le \frac{\sqrt{2}}{d}(1+o(1))$ as $d\to \infty.$ In particular, $d\beta_c^s(d)\le \frac{1}{\sqrt{2}}(1+o(1))$ as $d\to +\infty$.

\section{Bounds on vertex infection time} 

In order to approach the equilibrium measure $\pi$ our processes  need to create, destroy and move around  infected vertices. It is then natural to introduce the infection time of $x\in \mathbb Z^d_+$ as the hitting time 
\begin{equation}\label{eqinfectiontime}
   \tau(x)=\inf\{t\ge 0: \ \omega_t(x)=0\}. 
\end{equation}
We will focus on the infection time of the vertex $ne^*,$ where $e^*=(1,1,\dots, 1)$ and, as we aim at cutoff results for $p$ sufficiently small, it is important to analyze first the case $p=0$. 
\subsection{The  $p=0$ case}
\label{sec:infinite T}

Consider the East and Modified East processes in $\mathbb Z^d_+, d\ge 2,$ with $p=0$ and initial configuration without infections. For convenience, we simply write $\mathbb P^{\star}(\cdot)$ for their law. In both cases, at rate one infection is created at the origin and from there it will propagate to any other vertex of $\mathbb Z^d_+$ without ever healing because $p=0$. Recall now the definition of $\beta_c^\star$ given in \eqref{eq:0}. 
\begin{proposition}
\label{prop:1} Let $\star\in \{s,b\}$ and suppose that $d\beta^\star_c<1$. Then there exists $\lambda^\star <1$ and $\kappa^\star >0$ such that, for all $n\in \mathbb N$ large enough,  
\begin{equation} 
\label{eq:1}
\mathbb P^{\star}\big(\tau(n e^*)\ge \lambda^\star n \big)\le e^{-\kappa^\star n}.
\end{equation}    

\end{proposition}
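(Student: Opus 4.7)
At $p=0$ resampling always yields an infected vertex, so infection is monotone. Through the graphical construction, the infection time $\tau(x)$ admits the first-passage-percolation (FPP) representation
\[
\tau(x)=\min_{\gamma:0\to x}\sum_{e\in\gamma}E_e\ \text{ (bond, }\star=b\text{)},\qquad \tau(x)=\min_{\gamma:0\to x}\sum_{v\in\gamma}E_v\ \text{ (site, }\star=s\text{)},
\]
where the $E_{(\cdot)}$ are i.i.d.\ $\mathrm{Exp}(1)$ weights on oriented edges (resp.\ sites) and $\gamma$ runs over oriented paths in $\mathbb Z_+^d$; note every such path from $0$ to $ne^*$ has edge-length exactly $nd$. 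The plan is to bound $\tau(ne^*)$ above via a coupling with a supercritical oriented percolation (OP). For $s>0$ call an edge (resp.\ site) \emph{$s$-fast} iff its weight satisfies $E\le s$; the $s$-fast configuration is then i.i.d.\ Bernoulli with parameter $p(s)=1-e^{-s}$, i.e.\ oriented bond (resp.\ site) percolation on $\mathbb Z_+^d$. Set $s_c^\star:=-\log(1-p_c^{o,\star})$ and pick $s$ just above $s_c^\star$ so that the $s$-fast percolation is supercritical.

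The first step is to prove that, for such $s$,
\[
\mathbb P\bigl(\exists\ \text{a fully $s$-fast oriented path }\gamma^*\text{ from $0$ to }ne^*\bigr)\ge 1-e^{-c_1 n}.
\]
By symmetry the central point $e^*/d$ of the simplex $\{x\in\mathbb R_+^d:\sum_i x_i=1\}$ lies in the interior of the asymptotic shape of the OP cluster of $0$, so the shape theorem for supercritical OP already yields existence of such a $\gamma^*$ with positive probability. Upgrading this to exponential probability is the main technical step and is carried out by a block renormalization à la Liggett--Schonmann--Stacey: for large block side $B$, suitably chosen "good block" events have probability close to $1$ and are finite-range dependent, hence stochastically dominate supercritical Bernoulli percolation on the renormalized lattice, on which a standard argument produces a macroscopic good path from the block of $0$ to that of $ne^*$ with the desired exponential tail. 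I would then take $\gamma^*$ to be (say) the lexicographically first $s$-fast oriented path from $0$ to $ne^*$, so that $\gamma^*$ is measurable with respect to the $s$-fast configuration alone.

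The second step is the concentration of $W:=\sum_{e\in\gamma^*}E_e$ (and the site analogue for $\star=s$). Conditionally on the $s$-fast configuration, the weights along $\gamma^*$ are i.i.d.\ copies of $\mathrm{Exp}(1)$ truncated at $s$, with mean
\[
\nu(s):=\frac{1-(1+s)e^{-s}}{1-e^{-s}}.
\]
A direct computation shows $\nu(s_c^\star)=1+\tfrac{(1-p_c^{o,\star})\log(1-p_c^{o,\star})}{p_c^{o,\star}}=\beta_c^\star$. Because the length of $\gamma^*$ is the deterministic quantity $nd$ and $\gamma^*$ is $s$-fast-configuration-measurable, Cram\'er's theorem applied conditionally gives, for every $\delta>0$, $\mathbb P(W\ge nd(\nu(s)+\delta))\le e^{-c_2 n}$. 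Under the hypothesis $d\beta_c^\star<1$, choose $s$ close enough to $s_c^\star$ and $\delta>0$ small enough that $\lambda^\star:=d(\nu(s)+\delta)<1$. Combining the two estimates and using $\tau(ne^*)\le W$ on the existence event, one concludes $\mathbb P^\star(\tau(ne^*)\ge \lambda^\star n)\le e^{-c_1 n}+e^{-c_2 n}\le e^{-\kappa^\star n}$.

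The main obstacle is the first step: promoting the shape-theorem-level positive-probability existence of $\gamma^*$ to an exponential-probability statement via block renormalization. This requires carefully setting up the good-block events so as to preserve both a super-criticality margin for the renormalized fast percolation and the short-range dependence hypothesis underpinning Liggett--Schonmann--Stacey domination. The precise numerical identification $\nu(s_c^\star)=\beta_c^\star$ is what makes the resulting upper bound on the time constant match the threshold $d\beta_c^\star<1$ assumed in the hypothesis.
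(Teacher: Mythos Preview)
Your overall strategy---threshold the i.i.d.\ $\mathrm{Exp}(1)$ weights at level $s$ to produce a supercritical oriented percolation, use that on a fast path the edge weights are i.i.d.\ truncated exponentials with mean $\nu(s)$, and then exploit the identity $\nu(s_c^\star)=\beta_c^\star$---is exactly the mechanism the paper uses. But your first step, as written, contains a gap, and your route to the exponential tail is more elaborate than necessary.

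The displayed claim
\[
\mathbb P\bigl(\exists\ \text{a fully $s$-fast oriented path from $0$ to }ne^*\bigr)\ge 1-e^{-c_1 n}
\]
is false: with probability at least $e^{-sd}>0$ every edge out of the origin (in the site case, the origin itself) is slow, so no $s$-fast path leaves $0$ at all. Your block-renormalization sketch would only deliver a fast path from the \emph{block} of $0$ to the \emph{block} of $ne^*$, which is not what you then use when you take ``the lexicographically first $s$-fast path from $0$ to $ne^*$''. The fix---patch with $O(1)$ arbitrary edges at each end---is elementary, but once you do it $\gamma^*$ is no longer measurable with respect to the fast configuration alone, so the conditional-Cram\'er step also needs a (small) adjustment. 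None of this is fatal, but it is a genuine gap in the proof as written.

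The paper avoids the need for any exponential bound on path existence. It splits the diagonal into $n/\ell$ blocks of side $\ell$ and uses monotonicity (no healing at $p=0$) together with the strong Markov property to get the submultiplicative bound
\[
\mathbb E^\star\bigl(e^{\tau(ne^*)/\ell}\bigr)\le \Bigl(\mathbb E^\star\bigl(e^{\tau(\ell e^*)/\ell}\bigr)\Bigr)^{n/\ell}.
\]
For a single block, the shape theorem for supercritical oriented percolation (Lemma~\ref{lem:high d}) gives an $s$-fast path from a $\delta\ell$-neighbourhood of $0$ to a $\delta\ell$-neighbourhood of $\ell e^*$ with probability $\ge 1-\epsilon$; on the complementary event one simply uses the crude bound $\mathbb E^\star(e^{2\tau(\ell e^*)/\ell})=O(1)$. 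The short unconstrained stubs contribute only $e^{O(\delta)}$ to the Laplace transform, yielding $\mathbb E^\star(e^{\tau(\ell e^*)/\ell})\le e^{d\beta_T^\star+o(1)}$, and Chernoff finishes. Thus the paper trades your Liggett--Schonmann--Stacey renormalization for a simpler ``Laplace transform along blocks'' argument that only requires the percolation path to exist with probability arbitrarily close to one, not exponentially close.
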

\begin{remark}
In the above setting, the infection time of a vertex of the form $n\vec e_i, i=1,\dots,d,$ is easily seen to have mean $n$. Hence,  when $d\beta^\star_c<1$ the vertex $n e^*$  is infected w.h.p. well before any vertex $n\vec e_i, i=1,\dots,d.$ In the next section we will prove that this feature, a key input for the cutoff result, is preserved for $p$ sufficiently small.      
\end{remark}
\begin{proof}[Proof of Proposition \ref{prop:1}]
We begin with the case $\star=b$.
Fix $n,\ell$ such that $1\ll \ell \ll n$ and let $x^{(i)}=i\ell e^*, i=1,\dots n/\ell$ (for simplicity we neglect integer part issues). 

Let $\omega$ be the configuration without infections and write recursively
$\tau_0 = \tau(0)$,  
    $\smash{\tau_i = \min\{t\ge \tau_{i-1}:\ \omega_t(x^{(i)})=0\}}$ so that 
$
\tau(ne^*)\le \sum_{i=1}^{n/\ell} \tau_i.$ Using the strong Markov property we conclude that
\begin{equation}
\label{eq:2}
     \mathbb E^b\big(e^{\tau(ne^*)/\ell}\big)\le  \mathbb E^b\big(e^{\tau_1/\ell }\big)\Big(\max_{i \in [\frac{n}{\ell}]} \sup_{\eta:\  \eta(x^{(i-1)})=0} \mathbb E^b_\eta\big(e^{\tau_i/\ell}\big)\Big)^{n/\ell -1}.
\end{equation}
Using the fact that there is no healing from infection and that the origin has to wait an exponential time to get infected, it is immediate to check that, for any $i \in [\frac{n}{\ell}]$ and for any $\eta$ with at least one infection at $x^{(i-1)}$, 
$
\mathbb E^b_\eta\big(e^{\tau_i/\ell}\big)\le \mathbb E^b\big(e^{ \tau_1/\ell}\big)$.
Thus the r.h.s of \eqref{eq:2} is not larger than $\mathbb E^b\big(e^{\tau_1/\ell}\big)^{n/\ell}$ and
in order to bound from above the latter quantity we follow \cite[Proof of Theorem 3]{CoxDurrett}.

Let $T^b_c$ be such that $1-e^{-T^b_c}=p_c^{o,b}$ and fix $T>T^b_c$. We assign a variable $Y\sim$ Exp($1$) to the origin and to each oriented edge $\vec e$ of the box $\Lambda_\ell=[0,\ell]^d$ a variable $X(\vec e)\sim$ Exp($1$) and we declare an edge $\vec e$ \emph{open} if $X(\vec e)<T$ and \emph{closed} otherwise. Clearly, we can sample the variables $X(\vec e)$ by first sampling the open and closed edges according to the product Bernoulli measure of parameter $1-e^{-T}$ and then assign, independently across $\Lambda_\ell$, to each open(closed) edge an Exp(1) variable conditioned on being smaller(larger) than $T$. 
In terms of the graphical construction of the process with no infection at time $t=0$, $Y$ is time needed to infect the origin, $X(\vec e)$  is the time it takes for the Poisson clock attached to  $\vec e$ to ring \emph{after} the infection time of the tail of $\vec e$. Finally, we set 
\begin{align*}
    \beta^b_T &= \mathbb E\big(X(\vec e)\,|\, X(\vec e)<T\big)=\frac{\big(1-e^{-T}(T+1)\big)}{1-e^{-T}},\\
    \alpha^b_T(t)&=\mathbb E\big(e^{tX(\vec e)}\,|\, X(\vec e)\le T\big)= \frac{e^{T}-e^{t\,T}}{(1-t)(e^T-1)},\quad 0\le t<1.
\end{align*}
Clearly, $\alpha^b_T(t)= 1+ \beta^b_T t+ O(t^2)$ as $t\to 0$.

The heuristic motivation justifying the above construction is as follows. Suppose that $d\beta^b_c<1$ and that $T>T^b_c$ is so close to $T_c^b$ that also $d\beta^b_T<1$. Since $1-e^{-T}>p_c^{o,b}$ we expect w.h.p. a positively oriented open path $\gamma$, i.e. a concatenation of oriented open edges, from a neighborhood of the origin to a neighborhood of the opposite vertex $\ell e^*$.  As the edges of $\gamma$ are all open, the mean crossing time of each edge is $\beta_T$ and therefore  the infection should propagate along $\gamma$ from its tail to its head in a time $\approx \beta_T |\gamma|\ll |\gamma|$, where $|\gamma|$ is the number of edges of $\gamma$.  By joining $\gamma$ to the origin and to $\ell e^*$ with two arbitrary "short" oriented paths, we conclude that, under the above assumptions, the time to infect $\ell e^*$ w.h.p. is not larger than $d\beta_T \ell(1+o(1))$ for large $\ell$. 

A precise formulation of what we just said is the content of the next lemma. 
\begin{lemma}
\label{lem:2}    
Assume that $d\beta^b_c<1$ and choose $T>T^b_c$ such that $d\beta^b_T<1$.  Then, for any $\ell$ large enough, we have
\begin{equation} \label{eq7}
\mathbb E^b\big(e^{\tau_1/\ell}\big)\le 
e^{d\beta^b_T + o(1)}.  
\end{equation}
\end{lemma}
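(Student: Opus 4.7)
My plan is to bound $\tau_1$ by the propagation time along a single oriented path $\hat\pi$ from $0$ to $\ell e^*=x^{(1)}$ whose bulk is an open oriented path in the supercritical oriented bond percolation induced by $\{X(\vec e)<T\}$, and then evaluate the exponential moment using the conditioned distribution $\alpha_T^b$. By the graphical construction at $p=0$ (no healing) together with the memoryless property of the Poisson clocks, for any oriented path $\hat\pi$ from $0$ to $\ell e^*$ one has
\[
\tau_1\le Y+\sum_{\vec e\in\hat\pi}X(\vec e),
\]
where $Y\sim\mathrm{Exp}(1)$ is the first ring at the origin, independent of the $X(\vec e)$'s, which along $\hat\pi$ are i.i.d.\ $\mathrm{Exp}(1)$. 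The aim is to pick $\hat\pi$ so that as many as possible of its $\approx d\ell$ edges satisfy $X(\vec e)<T$, since an open-edge exponential moment at $1/\ell$ is $\alpha_T^b(1/\ell)=1+\beta_T^b/\ell+O(\ell^{-2})$, smaller than the unconditional $(1-1/\ell)^{-1}$.

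The central input is a supercritical oriented percolation estimate. Since $1-e^{-T}>p_c^{o,b}$, I would define the good event
\[
G=\bigl\{\exists\, u\in\Lambda_{\sqrt\ell},\ v\in\ell e^*-\Lambda_{\sqrt\ell},\ \text{and an oriented open path $\gamma$ from $u$ to $v$}\bigr\}.
\]
A standard block-renormalization / shape-type argument for supercritical oriented percolation (presumably the content of the appendix referenced in the introduction) gives $\mathbb P(G^c)\le e^{-c\sqrt\ell}$ for some $c=c(T)>0$. On $G$ the open path has length $|\gamma|=\|v-u\|_1\le d\ell$, and I would set $\hat\pi=\pi_1*\gamma*\pi_2$, with coordinate-direction paths $\pi_1:0\to u$ and $\pi_2:v\to\ell e^*$ each of $\ell_1$-length $\le d\sqrt\ell$ and chosen edge-disjoint from $\gamma$.

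Conditioning on the openness configuration $(O_{\vec e})_{\vec e}$ (which determines $u,v,\gamma,\pi_1,\pi_2$), the weights $X(\vec e)$ are conditionally independent: on open edges they are $\mathrm{Exp}(1)$ restricted to $[0,T)$ with exponential moment $\alpha_T^b$, and on closed edges they are $\mathrm{Exp}(1)$ restricted to $[T,\infty)$ with exponential moment $e^{T/\ell}(1-1/\ell)^{-1}=1+O(1/\ell)$. The $|\gamma|\le d\ell$ open edges of $\gamma$ thus contribute at most $\alpha_T^b(1/\ell)^{d\ell}=e^{d\beta_T^b+o(1)}$; the $O(\sqrt\ell)$ edges in $\pi_1,\pi_2$ together with $Y$ contribute $[1+O(1/\ell)]^{O(\sqrt\ell)}=e^{o(1)}$, giving $\mathbb E^b(e^{\tau_1/\ell}\mathbf 1_G)\le e^{d\beta_T^b+o(1)}$. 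For the complement, bounding $\tau_1$ along the straight staircase of length $d\ell$ yields $\mathbb E^b(e^{2\tau_1/\ell})\le(1-2/\ell)^{-(d\ell+1)}\le C$ uniformly for $\ell\ge 3$, and Cauchy--Schwarz gives $\mathbb E^b(e^{\tau_1/\ell}\mathbf 1_{G^c})\le\sqrt{\mathbb P(G^c)}\cdot\sqrt C=O(e^{-c\sqrt\ell/2})=o(1)$. Summing the two pieces proves \eqref{eq7}.

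The hard part is the percolation estimate. While it is intuitively clear in the supercritical regime that a neighbourhood of $0$ is linked by an oriented open path of length $\le d\ell$ to a neighbourhood of $\ell e^*$, obtaining a sufficiently fine high-probability statement tailored to the tube we need requires a careful block-renormalization argument; this is the technical ingredient the paper's appendix is designed to supply.
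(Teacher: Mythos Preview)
Your approach is essentially the paper's: bound $\tau_1$ by the passage time along a single oriented path whose bulk consists of open edges, use the conditioned moment $\alpha_T^b$ on the bulk, an unconditional-type bound on the short boundary pieces, and Cauchy--Schwarz together with the easy estimate $\mathbb E^b(e^{2\tau_1/\ell})=O(1)$ on the complement of the good percolation event. The only substantive difference is quantitative: you take endpoint windows of size $\sqrt\ell$ and assert $\mathbb P(G^c)\le e^{-c\sqrt\ell}$, whereas the paper takes windows of size $\delta\ell$ and only uses $\mathbb P(G^c)\le\epsilon$ for $\ell$ large (this is all its appendix lemma delivers). Your stronger claim is plausible but is not what the appendix proves; if you replace $\sqrt\ell$ by $\delta\ell$ your boundary contribution becomes $e^{O(\delta)}$ rather than $e^{o(1)}$, and you recover exactly the paper's bound $e^{d\beta_T^b+O(\sqrt\epsilon)+O(\delta)+O(\ell^{-1})}$, which suffices since $\epsilon,\delta$ are arbitrary. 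One small point in your favour: you correctly bound the boundary edges via the \emph{closed}-edge conditional moment $e^{T/\ell}(1-1/\ell)^{-1}$, which is the right worst case after conditioning on the openness configuration; the paper writes the unconditional moment $(\ell/(\ell-1))$ there, which is harmless since both are $1+O(1/\ell)$.
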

\begin{proof}
 Fix $\epsilon,\delta >0$ very small. From Lemma \ref{lem:high d} in the Appendix, it follows that we can find a sufficiently large $\ell$ such that with probability at least $1-\epsilon$ there exists an open positively oriented path $\gamma$ in $\Lambda_{\ell}$ from\footnote{We write  $\delta \ell$ instead of $\lfloor \delta \ell \rfloor$ etc for lightness of notation.} $H_{\delta d \ell }\cap \Lambda_{\ell}$ to $H_{(1-\delta)d\ell}\cap \Lambda_{\ell}$. Conditionally on the existence of such a  path, we choose one according to some preassigned order and complete it in some arbitrary way to obtain an oriented path $\gamma\subset \Lambda_\ell$ from the origin to $\ell e^*$ with the property that all its edges between  $H_{\delta d\ell }$ and $H_{(1-\delta)d\ell}$ are open.  Using the independence of the variables $X(\vec e)$ along the path we get
\begin{align*}
 \mathbb E^b\big(e^{\tau_1/\ell}\big) &\le \Big(\epsilon\,\mathbb E^b\big(e^{2\tau_1/\ell}\big)\Big)^{1/2} +  \bigg(\frac{\ell}{\ell-1}\bigg)^{2\delta\ell } \alpha_T(\ell^{-1})^{d(1-2\delta)\ell }\\
 &\le e^{d\beta^b_T + O(\sqrt{\epsilon})+O(\delta)+O(\ell^{-1})},
 \end{align*}
where we used the easy bound $\mathbb E^b\big(e^{2 \tau_1/\ell }\big)=O(1)$. 
\end{proof}
In conclusion, for any $\ell$ large enough $\mathbb E^b\big(e^{\tau(ne^*)/\ell}\big)\le e^{\big(d\beta^b_T+o(1)\big)n/\ell}.$ 

In order to conclude the proof of the proposition it is enough to use the Chernoff bound: 
\begin{align*}
    \mathbb P^b\big(\tau(ne^*)\ge \lambda^b n\big) &\le e^{-\lambda^b n/\ell}\, \mathbb E^b\big(e^{\tau(ne^*)/\ell}\big),
\end{align*}
with $\ell $ as above and e.g $\lambda^b= (1+d\beta^b_T)/2 <1$. 

In the case $\star=s,$ let $T^s_c$ be such that $1-e^{-T^s_c}=p_c^{o,s}$ and fix $T>T^s_c$. One then first samples open and closed \emph{vertices} according to the product Bernoulli measure of parameter $1-e^{-T}$. Then, independently across $\Lambda_\ell$, to each open/closed vertex $v\neq 0$ one assigns an Exp(1) variable $X(v)$ conditioned on being smaller/larger than $T$. The variable $X(v)$ is the time it takes for the Poisson clock attached to $v$ to ring \emph{after} the first infection has reached $\mathcal U_v$. As before, an Exp($1$) variable $Y$ is attached to the origin, representing the infection time of the origin.

For any $v\in \Lambda_\ell,$ let $\gamma=(v^{(0)},v^{(1)},v^{(2)},\dots, v^{(m)})$ be a sequence of vertices forming an oriented path
% , i.e. for all $i=0,\dots,m-1$ $(v^{(i)},v^{(i+1)})$ is a positively oriented edge of $\mathbb Z^d_+$, 
from the origin to $v$. By construction
\[
\tau(v)=\min_{x\in \mathcal U_v}\tau(x) + X(v)\le \tau(v^{(m-1)})+X(v)\quad \Rightarrow \tau(v)\le Y+\sum_{i=1}^m X(v^{(i)}), 
\]
and the rest of the proof becomes now the same as in the bond case.  
\end{proof}

\subsection{The case $0<p\ll 1$} 
\label{sec:high T}
In this section we prove the analog of Proposition \ref{prop:1} for sufficiently small positive $p$. In this case, the monotonicity in the initial configuration $\eta$ that was used for $p=0$  is lost.  
\begin{proposition}\label{prop:3}
 Let $\star\in \{s,b\}$ and suppose that $d\beta^\star_c<1$. Then, for all $p$ small enough there exists $\lambda^\star <1$ and $\kappa^\star >0$ such that, for all $n\in \mathbb N$ large enough,  
\begin{equation} 
\label{eq:4}
\max_{x\in \mathbb Z^d_+}\max_{\eta:\, \eta(x)=0}\mathbb P_\eta^{\star}\big(\tau(x+n e^*)\ge \lambda^\star n \big)\le e^{-\kappa^\star n}.
\end{equation}    
\end{proposition}
\begin{proof}
We use the same notation and strategy of the proof of Proposition \ref{prop:1}. The steps leading to \eqref{eq:2} easily prove that, for $\ell\in \mathbb N$,
\begin{equation}
    \label{eq:5} \max_{x \in\mathbb Z_+^d}  \max_{\eta:\, \eta(x)=0}\mathbb E^{\star}_\eta\big(e^{\tau(x+ne^*)/\ell}\big)\le  \Big(\max_{x\in \mathbb Z^d_+}\max_{\eta:\, \eta(x)=0} \mathbb E^{\star}_\eta\big(e^{\frac{\tau(x+\ell e^*)}{\ell}}\big)\Big)^{n/\ell}. 
\end{equation}
\begin{lemma}
\label{lem:aux1}
Assume that $d\beta^\star_c<1$ and choose $T>T^\star_c$ such that $d\beta^\star_T<1$.  For any $\varepsilon>0$ there exist $\ell_0>0$ and $0<p_0<1$ such that the following holds. For all $\ell\ge \ell_0$ and $0<p\le p_0$ 
\begin{equation*}
  \max_{x\in \mathbb Z^d_+}\max_{\eta:\  \eta(x)=0} \mathbb E^{\star}_\eta\big(e^{\frac{\tau(x+\ell e^*)}{\ell}}\big)\le e^{d\beta^\star_T +\varepsilon}.
\end{equation*}
\end{lemma}
\begin{proof}[Proof of Lemma \ref{lem:aux1}]
Fix $\ell$ large enough and  $x\in \mathbb Z^d_+$. Fix also $\eta$ such that $\eta(x)=0$ and write $\tau:=\tau(x+\ell e^*)$. Then, for any $c>0$ large enough, write
    \begin{equation}
        \label{eq:6}
        \mathbb E^{\star}_\eta\big(e^{\tau/\ell}\big) \le  \mathbb E_\eta^\star\big(e^{\tau/\ell}\mathbb{1}_{\{\tau<c \ell\}}\big) + 
       \mathbb E_\eta^\star\big(e^{\tau/\ell}\mathbb{1}_{\{\tau\ge c \ell\}}\big).
    \end{equation}
    Using the standard finite speed of propagation (see \cite[Proof of Proposition 3.12]{HTbook}) and the graphical construction, the first term in the r.h.s. of \eqref{eq:6} is bounded from above by 
    \[
    e^{d\beta_T^\star +o(1)} + O(\ell^{d+1}p)
    \]
    uniformly in the choice of $x,\eta$. The first term above is the bound we get from Lemma \ref{lem:2} for the $p=0$ evolution, while the second term bounds the probability that within time $c\ell$ there is a healing update (i.e. an update occurring with probability $p$) at some vertex $v$ within distance $O(\ell)$ from $x$.

To estimate the second term in the r.h.s. of \eqref{eq:6}, we need the following lemma (for very closely related results see \cite[Corollary 2.4]{Mareche} and \cite[Theorem 4.7]{CFM}).

\begin{lemma}
 \label{lem:aux2}
 There exist positive constants $c_0,m$ such that for all $\ell\in \mathbb N$ and $t\ge c_0\ell,$ 
\[
\max_x \max_{\eta:\, \eta(x)=0}\mathbb P^\star_\eta \big(\tau(x+\ell e^*)\geq t\big)\leq e^{-mt}.
\]
\end{lemma}
\begin{proof}[Proof of Lemma \ref{lem:aux2}]
Essentially, all the key steps have already been worked out in \cite{Mareche,CFM}\footnote{Strictly speaking \cite{Mareche,CFM} only deals with the East model. However, one easily realizes that the results we need from these works hold for the Modified East  as well. } and therefore we will only outline how to combine them in order to get the result.

Fix $0<\delta < 1, x\in \mathbb Z^d_+$ and $\eta$ such that $\eta(x)=0$  and let $\mathcal W_{\delta t}=\{v\prec x:\ \|v-x\|_1\le \delta t\}$. For $v\in \mathbb Z^d_+$ let 
$\mathcal T_t(v)$ be the total time up to $t$ that $v$ was infected and let $\mathcal{G}_t(v)= \{\mathcal T_t(v)\ge \frac{(1-p)}{4}t\}$.   
Thanks to \cite[Proposition 3.1]{Mareche} there exists a positive constant $m=m(\delta)$ such that $\cup_{z\in \mathcal W_{\delta t}}\mathcal G_t(z)$ holds with probability greater than $1-e^{-m t}$. On the latter event consider the vertices $v\in \mathcal W_{\delta t}$ such that $\mathcal G_t(v)$ holds and among those with the smallest $\ell_1$-norm choose $\xi$ according to some arbitrary order. Observe that the event $\xi=v$ is measurable w.r.t. the $\sigma$-algebra $\mathcal F_v$ generated by the Poisson clocks and coin tosses in the set $\{v'\in \mathbb Z^d_+:\ \|v'\|_1\le \|v\|_1\}$.

By conditioning on the occurence/non-occurrence of $\cup_{z\in \mathcal W_{\delta t}}\mathcal G_t(z)$ we get
\begin{gather*}
    \max_x \max_{\eta:\, \eta(x)=0}\mathbb P^\star_\eta \big(\tau(x+\ell e^*)\geq t\big)
    \leq e^{-mt} 
    + c (\delta t)^d \max_x \max_{\eta:\, \eta(x)=0}\max_{v\in \mathcal W_{\delta t}}\mathbb P^\star_\eta \big(\tau(x+\ell e^*)\geq t\,;\, \mathcal G_t(v) \big).
\end{gather*}
We now use \cite[Lemma 4.9]{CFM} to get that there exist positive constants $\varepsilon,\kappa$ independent of $\delta$ such that
 \begin{align*}
  &\max_{\substack{x\in \mathbb Z^d_+\\ \eta :\, \eta(x)=0}}\max_{v\in \mathcal W_{\delta t}}\mathbb P^\star_\eta \big(\tau(x+\ell e^*)\geq t\,;\, \mathcal G_t(v) \big)
 \\
  \le p^{-(\ell +\delta t)}e^{-\kappa t} +    \max_{\substack{x\in \mathbb Z^d_+\\ \eta: \, \eta(x)=0}}\max_{v\in \mathcal W_{\delta t}}&\mathbb P^\star_\eta \Big(\tau(x+\ell e^*)\geq t\,;\, \mathcal G_t(v)\,;\,\mathcal T_t\big((x_1+\ell,v_2,\dots, v_d)\big)\ge \varepsilon \mathcal T_t(v)\Big)\\
 \le p^{-(\ell +\delta t)}e^{-\kappa t} +  
 \max_{\substack{x\in \mathbb Z^d_+\\ \eta:\, \eta(x)=0}}\max_{v\in \mathcal W_{\delta t}}&\mathbb P^\star_\eta \Big(\tau(x+\ell e^*)\geq t\,;\, \mathcal T_t\big((x_1+\ell,v_2,\dots, v_d)\big)\ge \varepsilon \frac{(1-p)}{4}t\Big). 
\end{align*}   

Notice that the vertex $(x_1+\ell,v_2,\dots, v_d)$ has now the correct first coordinate.  
We can repeat the above reasoning for each of the remaining coordinates and finally get 
\begin{align*}
  \max_x&\max_{\eta:\,\eta(x)=0}\max_{v\in \mathcal W_{\delta t}}\mathbb P^\star_\eta \Big(\tau(x+\ell e^*)\geq t\,;\, \mathcal G_t(v) \Big)\le d p^{-(\ell +\delta t)}e^{-\kappa t} \\
  &+  
 \max_{\substack{x\in \mathbb Z^d_+\\ \eta:\, \eta(x)=0}}\mathbb P^\star_\eta \Big(\tau(x+\ell e^*)\geq t\,;\, \mathcal T_t(x+\ell e^*)\ge \varepsilon^d \frac{(1-p)}{4}t\Big) = d p^{-(\ell +\delta t)}e^{-\kappa t}.
\end{align*} 
The proof of the lemma is complete by choosing $\delta$ small enough and $t\ge c \ell$ with $c$ large enough. 
\end{proof} 
Back to the proof of Lemma \ref{lem:aux1}, using Lemma \ref{lem:aux2} we conclude that for all $\ell$ large enough the second term in the r.h.s. of \eqref{eq:6} is bounded from above by $e^{-c' \ell}$ for some positive constant $c'$.  In conclusion, for any $\varepsilon>0$ 
\[
\mathbb E_\eta^{\star}\big(e^{\tau/\ell}\big)\le e^{d\beta_T^\star +o(1)} + O(\ell^{d+1}p) + e^{-c' \ell}\le e^{d\beta_T^\star +\varepsilon }
\]
by choosing $p$ small enough and $\ell$ large enough. \end{proof}    
In conclusion, from \eqref{eq:5} and Lemma \ref{lem:aux1} it follows that
\[
\max_x\max_{\eta:\, \eta(x)=0}\mathbb E^{\star}_\eta\big(e^{\tau(x+ne^*)/\ell}\big)\le e^{(d\beta_T^\star +\varepsilon)n/\ell},
\]
and the proof of Proposition \ref{prop:3} easily follows from the assumption $d\beta_T^\star<1$ and  the Chernoff bound.
\end{proof}

\section{Proof of Theorem \ref{thm:1}}
\label{sec:proof}
Before proving Theorem \ref{thm:1} we need the following consequence of Proposition \ref{prop:3}. Recall the constant $\rho$ from \eqref{eq:cutoff1d}.
\begin{proposition}
    \label{prop:2}
 Fix $d\ge 2$ and assume that $d'\beta_c^\star(d')<1$ for all $d'=2,\dots,d$. Then, for all $p$ sufficiently small %there exists $\varepsilon>0$ small enough such that
 the following holds for all large enough $L$. Fix $x\in \mathbb Z^d_+$ and write $\|x\|_\infty=\max_{i \in [d]} x_i$. Then, 
\begin{equation}
    \label{eq:finalprop}
    \max_\eta \mathbb P^\star_\eta\big(\tau(x)\ge \rho \|x\|_\infty+d (\|x\|_\infty \vee  L)^{2/3}\big)\le d e^{-L^{1/3}}.
\end{equation}
\end{proposition}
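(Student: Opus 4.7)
My plan is to propagate the infection from the origin to $x$ along a staircase of diagonal jumps in successively lower-dimensional subspaces, and to control each jump via an adaptation of the propositions of Section~\ref{sec:high T} together with the one-dimensional cutoff~\eqref{eq:cutoff1d}.

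By the symmetry of the dynamics under permutations of the coordinate axes I may assume $x_1\ge x_2\ge\cdots\ge x_d$, so $\|x\|_\infty=x_1$. Set $x_{d+1}:=0$ and define, for $k=0,1,\dots,d$,
\[
y_k:=(\underbrace{x_{d-k+1},\dots,x_{d-k+1}}_{d_k:=d-k+1\text{ entries}},\,x_{d-k+2},\dots,x_d)\in\mathbb Z^d_+,
\]
so $y_0=0$, $y_d=x$, and $y_k-y_{k-1}=\ell_k\,e^*_{d_k}$ is a jump of length $\ell_k:=x_{d-k+1}-x_{d-k+2}\ge 0$ along the $d_k$-dimensional diagonal $e^*_{d_k}=(1,\dots,1,0,\dots,0)$ with $d_k$ leading ones. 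Telescopically $\sum_{k=1}^d\ell_k=x_1$ and the $k=d$ step is one-dimensional. A short preliminary argument using the unconstrained updating rate at the origin shows that, from arbitrary $\eta$, the vertex $y_0=0$ becomes infected within time $O(\log L)$ with probability at least $1-e^{-L^\varepsilon}$, so I may restart the analysis from that moment.

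Let $T_k$ denote the infection time of $y_k$. For each $k<d$, I would apply the strong Markov property at $T_{k-1}$ and invoke the following adaptation of the high-temperature version of Proposition~\ref{prop:1}: for any $v\in\mathbb Z^d_+$ and any $\eta$ with $\eta(v)=0$,
\[
\mathbb P^\star_\eta\bigl(\tau(v+n\,e^*_{d_k})\ge\lambda_{d_k} n\bigr)\le e^{-\kappa\, n}, \qquad \lambda_{d_k}<1.
\]
This is obtained by rerunning the oriented-percolation argument of Lemma~\ref{lem:2} and the healing-event control of Lemma~\ref{lem:aux2} inside the $d_k$-dimensional slice through $v$: the hypothesis $d_k\beta_c^\star(d_k)<1$ supplies the supercritical input, and within the slice the ambient $d$-dimensional dynamics couples with the $d_k$-dimensional dynamics at $p=0$ trivially (extra coordinates can only provide additional infected helpers, which are absent when $\eta$ is supported in the slice), while healings are controlled by the same $O(\ell^{d+1}p)$-type estimate as in Lemma~\ref{lem:aux1}. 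Applied with $v=y_{k-1}$ and $n=\ell_k$, this yields $T_k-T_{k-1}\le\lambda_{d_k}\ell_k$ with failure probability $e^{-\kappa\ell_k}$ when $\ell_k\ge L^\varepsilon$; below that threshold, Lemma~\ref{lem:aux2} gives $T_k-T_{k-1}\le c_0\ell_k+CL^\varepsilon\le O(L^\varepsilon)$ with the same order of failure probability. For $k=d$ the motion is one-dimensional along $\vec e_1$, hence the process coincides with the 1D East chain (Remark~\ref{rem:1}), and \eqref{eq:cutoff1d} combined with a stretched-exponential tail bound for the 1D hitting time gives $T_d-T_{d-1}\le\rho\ell_d+(\|x\|_\infty\vee L)^{2/3}$ with failure probability $e^{-L^\varepsilon}$.

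Setting $\lambda:=\max_{2\le d'\le d}\lambda_{d'}<\rho$ (valid for $p$ small since each $\lambda_{d'}<1$ while $\rho=1+O(p)$), the deterministic parts sum to
\[
\sum_{k=1}^{d-1}\lambda_{d_k}\ell_k+\rho\ell_d\le\lambda\sum_{k=1}^{d-1}\ell_k+\rho\ell_d=\lambda x_2+\rho(x_1-x_2)=\rho x_1-(\rho-\lambda)x_2\le\rho\|x\|_\infty,
\]
while the fluctuations contribute in aggregate at most $d(\|x\|_\infty\vee L)^{2/3}$; a union bound over the $d$ steps yields the total failure probability $\le d\,e^{-L^\varepsilon}$ required by~\eqref{eq:finalprop}. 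The main obstacle is securing the sharp one-dimensional tail bound with stretched-exponential decay $e^{-L^\varepsilon}$ and window $(\|x\|_\infty\vee L)^{2/3}$: \eqref{eq:cutoff1d} is only a total-variation statement, so a quantitative concentration inequality for $\tau_1(n)$ around $\rho n$ must be extracted from the proof of~\cite{GLM} or derived directly from the 1D East structure. A secondary technical point is verifying that the $d_k$-dimensional percolation argument genuinely transfers to the slice without interference from the ambient $d$-dimensional healing events.
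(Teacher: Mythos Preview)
Your staircase decomposition is exactly the right picture, but the order in which you traverse it creates a genuine gap. You take the $d$-dimensional diagonal jump first and end with the one-dimensional step from $y_{d-1}=(x_2,x_2,x_3,\dots,x_d)$ to $x$ along $\vec e_1$. Your appeal to Remark~\ref{rem:1} for this last step is incorrect: that remark identifies the restricted process with the lower-dimensional one only on coordinate \emph{faces} $\{z:z_{j+1}=\dots=z_d=0\}$, not on an interior line $\{(t,x_2,\dots,x_d):t\ge x_2\}$ with $x_2,\dots,x_d>0$. On such a line the $d$-dimensional dynamics is \emph{not} the 1D East chain---vertices receive additional facilitation from transverse neighbours, and since the model is not attractive there is no monotone coupling that would let you import the sharp constant $\rho$ from \cite{GLM}. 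Without $\rho$ on that step you cannot close the deterministic budget $\rho\|x\|_\infty$.

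The paper fixes this by running the same staircase in the opposite order, packaged as an induction on $d$: set $\phi(x)=x-x_d e^*$, which lands on the face $\{z_d=0\}$, and use the inductive hypothesis there (legitimately via Remark~\ref{rem:1}) before performing a single full $d$-dimensional diagonal jump $\phi(x)\to x$ handled directly by the high-temperature proposition of Section~\ref{sec:high T}. Unfolding the induction, the 1D step is from $0$ to $(x_1-x_2,0,\dots,0)$ on the actual $\vec e_1$-axis, where \cite{GLM} applies verbatim with $\varepsilon=1/3$; each subsequent $d'$-dimensional jump stays on $\{z_{d'+1}=\dots=z_d=0\}$, so the $d'$-dimensional proposition applies without any adaptation. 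This ordering also eliminates your ``secondary technical point'' about transporting the $d_k$-dimensional percolation argument into an interior slice of the ambient $d$-dimensional process: no such transport is needed, because each lower-dimensional jump genuinely lives in the lower-dimensional model.
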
 
\begin{proof}[Proof of Proposition \ref{prop:2}]
Write $\mathcal B_{x,d}$ for the event that 
$\tau(x)\ge \rho \|x\|_\infty+d(\|x\|_\infty\vee  L)^{2/3}$. 
We will prove the proposition by induction in the dimension $d$. For $d=1$ the required bound for $\max_\eta \mathbb P^\star_\eta(\mathcal B_{x,1})$ has been proved in \cite{GLM}. %for $\varepsilon=1/3$.
Fix $x\in \mathbb Z^d_+$ and suppose that one coordinate of $x$ is zero. Then \eqref{eq:finalprop} follows at once from Remark \ref{rem:1} and the induction hypothesis up to $d-1$. If $\min_{i \in [d]} x_i>0$  we may assume w.l.o.g. that $x_1\ge x_2\ge \dots\ge x_d>0$ and in this case we set
$\phi(x)=x-x_d e^*.$
We then bound the infection time of $x$ by 
\[
\tau(x)\le \tau(\phi(x))+\inf\{t\ge \tau(\phi(x)):\ \omega_t(x)=0\}.
\]
By induction, $\max_\eta \mathbb P^\star_\eta(\mathcal B_{\phi(x),d-1})\le (d-1)e^{-L^{1/3}}$ and thus 
\begin{align}
 \label{eq:7}   
 \max_\eta  \mathbb P_\eta^\star(\mathcal B_{x,d}) &\le (d-1)e^{-L^{1/3}} +
\max_\eta\mathbb P_\eta^\star( \mathcal B_{x,d}; \mathcal B^c_{\phi(x),d-1})\le  (d-1)e^{-L^{1/3}} \nonumber\\
&+
\max_{\eta:\, \eta(\phi(x))=0}\mathbb P_\eta^\star\big(\tau(x)\ge 
\rho x_d + d (x_1\vee  L)^{2/3} -(d-1)((x_1-x_d)\vee L)^{2/3} \big)\nonumber \\
&\le (d-1)e^{-L^{1/3}}+ \max_{\eta:\, \eta(\phi(x))=0}\mathbb P_\eta^\star\big(\tau(x)\ge 
\rho x_d + (x_1\vee L)^{2/3}\big).
\end{align}
Above we used the strong Markov property and the fact that, by construction,  $\|x\|_\infty=x_1$ and $\|\phi(x)\|_\infty = x_1-x_d.$ 
We now bound the last term in the r.h.s. above.
If $\delta$ is sufficiently small and $x_d\le \delta L^{2/3}$ we can use Lemma \ref{lem:aux2} to get that 
\[
\max_{\eta:\, \eta(\phi(x))=0}\mathbb P_\eta^\star\big(\tau(x)\ge 
\rho x_d + (x_1\vee L)^{2/3}\big)\le e^{-m L^{2/3}}, 
\]
for some positive constant $m$. If instead $x_d> \delta L^{2/3}$ we recall that $x= \phi(x)+x_de^*$  and choose $p$ so small that $\rho>\lambda^\star$ with $\lambda^\star$ the constant appearing in Proposition \ref{prop:3}. Using that proposition we conclude that in this case 
\[
\max_{\eta:\, \eta(\phi(x))=0}\mathbb P_\eta^\star\big(\tau(x)\ge 
\rho x_d + (x_1\vee L)^{2/3}\big)\le e^{-c L^{2/3}}, 
\]
for some constant $c>0$. In both cases, the r.h.s. of \eqref{eq:7} is smaller than $d e^{-L^{1/3}}$ for $L$ large enough.   
\end{proof}
Back to the proof of Theorem \ref{thm:1} consider both processes in the box $\Lambda_L$ and recall that $ d_L^\star(t)=\max_{\eta \in \Omega_{\Lambda_L}} \|\mathbb P^\star_\eta(\omega_t=\cdot)-\pi_{\Lambda_L}\|_{\text{TV}}$. 
As the marginal of the processes on one of the coordinate axes coincide with the East model on $\{0,1,\dots,L\}$ with the origin unconstrained, it follows immediately that $T_{\text{mix}}^\star(L;d)\ge T_{\text{mix}}(L;1)$. 
Moreover, using \eqref{eq:cutoff1d} and the one-dimensional cutoff result, we obtain $\lim_{L\to \infty} d_L^\star(\rho L- L^{2/3})=1.$
We will now prove that
\begin{equation}
    \label{eq:8}
    \lim_{L\to \infty} d_L^\star\big(\rho L + (d+1)L^{2/3}\big)=0,
\end{equation}
and, for this purpose, we follow closely \cite[Section 5]{CFM}. 

Let $T_L= \rho L + d L^{2/3},$ let $\hat \Omega_L$ be the set of those configuration in $\Omega_{\Lambda_L}$ such that in any interval $I\subset \Lambda_L$ parallel to one of the coordinate axes and of length $\hat \ell =\lfloor \log(L)^4\rfloor$ there exists at least one infection, and let $\tau_{\hat \Omega_L}$ be the hitting time of $\hat \Omega_L$.
\begin{claim}
There exists $m>0$ such that for  $L$ large enough 
\begin{equation}
    \label{eq:claim1}
    \sup_\eta \mathbb P_\eta^\star\big(\tau_{\hat \Omega_L}>T_L+\frac 14 L^{2/3}\big)\le e^{-m\log(L)^4}.
\end{equation}  
\end{claim}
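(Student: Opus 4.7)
The plan is to strengthen the claim to the pointwise statement
\[
\sup_\eta \mathbb P_\eta^\star(\omega_{t^*}\notin\hat\Omega_L)\le e^{-m\log(L)^4},
\]
where $t^*:=T_L+\tfrac14 L^{2/3}$. This implies \eqref{eq:claim1} since $\{\omega_{t^*}\in\hat\Omega_L\}\subseteq\{\tau_{\hat\Omega_L}\le t^*\}$. The complementary event decomposes as a union over the $\cO(dL^d)$ axis-parallel intervals $I\subset\Lambda_L$ of length $\hat\ell=\lfloor\log(L)^4\rfloor$, so a union bound reduces the task to proving $\sup_\eta \mathbb P_\eta^\star(\omega_{t^*}|_I\equiv 1)\le e^{-m'\hat\ell}$ for a fixed $I$ and any $m'>m$; since $dL^d=e^{\cO(\log L)}$ is dwarfed by $e^{\hat\ell}$, this union-bound overhead is negligible.

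Next, I would apply Proposition \ref{prop:2} together with a union bound over $x\in\Lambda_L$ to conclude that, except on an event of probability at most $dL^d e^{-L^\varepsilon}\ll e^{-m\hat\ell}$ (because $L^\varepsilon$ grows faster than any polynomial in $\log L$), every vertex in $\Lambda_L$ satisfies $\tau(x)\le T_L$. On this ``infected-everywhere'' event the update constraint at every vertex has been activated at least once by $T_L$, and the remaining window $[T_L,t^*]$ of length $L^{2/3}/4$ is much longer than $\hat\ell$, so the local dynamics around $I$ has ample opportunity to perform many resamplings.

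The final step, which is also the main obstacle, is to show that the marginal law of $\omega_{t^*}|_I$ is within total-variation distance $e^{-c\hat\ell}$ of the equilibrium $\pi_I$ for some $c>0$; combined with the equilibrium identity $\pi_I(\omega\equiv 1)=p^{\hat\ell}=e^{-\hat\ell\log(1/p)}$, this delivers the required bound with $m'=\min(c,\log(1/p))-o(1)$ provided $p$ is taken small enough (say $p<e^{-1}$). To obtain the TV estimate one would enclose $I$ in a window $W$ of size polynomial in $\hat\ell$, use the finite speed of propagation (\cite[Proposition 3.12]{HTbook}) to reduce to the dynamics of $W$ in isolation with suitably infected boundary, and invoke the positive spectral gap of that finite-volume reversible chain together with the fact that $L^{2/3}/4$ far exceeds its relaxation time. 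The hard part is producing a polylogarithmic relaxation-time bound for the finite-volume chain without invoking the $d$-dimensional mixing-time result we are trying to prove; following closely \cite[Section 5]{CFM}, I would bootstrap this from the one-dimensional cutoff bound \eqref{eq:cutoff1d} and from the tail estimate of Lemma \ref{lem:aux2}, iterating a short-range equilibration argument across coordinate-axis slices of $W$.
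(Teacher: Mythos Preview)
Your reduction---union bound over the $O(dL^d)$ axis-parallel intervals and invoking Proposition \ref{prop:2} to control $\tau(x)$---matches the paper's, and the same arithmetic shows the union-bound cost is negligible. The divergence, and the gap, is in your final step.

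The paper's argument is local to each interval: for $I=\{x,x+\vec e_i,\dots,x+\hat\ell\vec e_i\}$ it applies the strong Markov property at the \emph{stopping time} $\tau_x$, so that the vertex $x$ is guaranteed infected at that moment, and then quotes \cite[Theorem 2.2]{Mareche} to obtain directly
\[
\max_{\eta:\,\eta(x)=0}\mathbb P^\star_\eta\big(\omega_{\frac14 L^{2/3}}(z)=1\ \forall z\in I\big)\le p^{\hat\ell}+c_1 e^{-c_2 L^{2/3}}.
\]
Mar\^ech\'e's theorem is precisely the statement that, once an infection is present at $x$, the marginal on any finite downstream set converges exponentially fast to equilibrium; it already handles the difficulty that $x$ may later heal.

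Your route loses this structure. After your ``infected-everywhere'' event you only know that each vertex has been infected \emph{at some time before} $T_L$; you know nothing about the configuration at the deterministic time $T_L$ itself, and in particular the boundary of your window $W$ could be fully healthy there. The spectral-gap step you propose needs a finite-volume chain with an ergodic boundary condition---a permanently infected site behind $W$---and you have asserted (``suitably infected boundary'') but not produced one. Nor can you simply freeze a nearby vertex: in the true dynamics it can heal, and controlling how often it is infected over $[T_L,t^*]$ is exactly the content of \cite[Theorem 2.2]{Mareche} (and of \cite[Proposition 3.1]{Mareche}, already used inside Lemma \ref{lem:aux2}). Your closing bootstrap sketch is thus a proposal to reprove that theorem rather than a way around it. The fix is to use the strong Markov property at $\tau_x$ for the tail vertex of $I$ instead of the fixed time $T_L$, and then cite \cite[Theorem 2.2]{Mareche} directly; no separate relaxation-time estimate is needed.
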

\begin{proof}[Proof of the claim]
Let $x \in \Lambda_L$ and $I:=\{x,x+\vec e_i,\dots,x+\hat \ell \vec e_i\}\subset \Lambda_L,\, \vec e_i\in \mathcal B,\, i \in [d]$. Using the strong Markov property w.r.t. the infection time $\tau(x)$ we get
\begin{align}
    \label{eq:claim2} \max_\eta\ \mathbb P_\eta^\star\big(\omega_{T_L+\frac 14 L^{2/3}}(z)&=1\, \forall z\in I\big) \nonumber \\
   \le 
     \max_\eta \ \mathbb P_\eta^\star\big(\tau(x)>T_L\big) &+ \max_{\eta:\, \eta(x)=0}\sup_{t\ge \frac 14 L^{2/3}}\mathbb P_\eta^\star\big(\omega_{t}(z)=1\, \forall z\in I\big).
\end{align}
Thanks to Proposition \ref{prop:2}, the first term in the r.h.s. above is smaller than $e^{-L^{1/3}}$. Using that $\hat \ell \le \big(\frac 14 L^{2/3})^{1/2d}$ for $L$ large enough, we can use \cite[Theorem 4.3]{CFM} with $f={\mathbbm 1}_{\{\omega(z)=1\ \forall z\in I\setminus\{x\}\}}$ to get that for all $L$ large enough there exist two positive constants $c_1,c_2$ such that 
\[
\max_{\eta:\, \eta(x)=0}\sup_{t\ge \frac 14 L^{2/3}}\mathbb P_\eta^\star\big(\omega_{t}(z)=1\, \forall z\in I\big)\le p^{\hat \ell-1}+ c_1e^{-c_2 L^{1/3d}}\le e^{-m\log(L)^4},
\]
for $L$ large enough. The claim follows by a union bound over the possible choices of $I$. 
\end{proof}
The final step proving \eqref{eq:8} is \cite[Lemma 5.5]{CFM} stating that the time to stationarity when the initial configuration is inside $\hat \Omega_L$ is at most $\log(L)^5$. 
More precisely,
\[
\lim_{L\to \infty}\max_{\eta\in \hat \Omega_L}\|\mathbb P_\eta^\star(\omega_{\log(L)^5}=\cdot)-\pi_{\Lambda_L}\|_{\text{TV}}=0.
\]

\appendix
\section{Appendix}

Consider standard oriented bond or site percolation in $\mathbb Z^d_+$ with parameter $p$  and for any $A\subset H_0, B\subset H_n$ write $A\rightsquigarrow B$ for the event that there exists an open oriented path from $A$ to $B$. 
\begin{lemma}
\label{lem:high d}
Fix $p>p^{o,\star}_c$. Then for any $\epsilon,\delta \in (0,1)$ there exists $n_0$ such that for any $n>n_0$ 
\[
P\big(H_0\cap [-\delta n,\delta n]^d \rightsquigarrow H_0\cap [-\delta n,\delta n]^d +n e^*\big)\ge 1-\epsilon.
\]
\end{lemma}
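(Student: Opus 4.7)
The plan is to derive the lemma from two classical facts about supercritical oriented percolation on $\mathbb Z^d$: (i) positivity of the percolation density $\theta(p):=\mathbb P(0\rightsquigarrow \infty)>0$ for $p>p_c^{o,\star}$, and (ii) linear transverse spread of the forward open cluster of any vertex that percolates to infinity. Write $A:=H_0\cap [-\delta n,\delta n]^d$ and $B:=A+ne^*\subset H_{nd}$; both have cardinality $\Theta(n^{d-1})$ and the same ``transverse'' extent (projection onto the hyperplane perpendicular to $e^*$) of diameter $\Theta(\delta n)$.

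First I would show that, with probability at least $1-\epsilon/2$ for $n$ large, the subset $\mathcal P:=\{x\in A: x\rightsquigarrow \infty\}$ of percolating starts in $A$ has cardinality at least $\tfrac{1}{2}\theta(p)|A|$. This follows from the first-moment identity $\mathbb E|\mathcal P|=\theta(p)|A|$ (translation invariance) together with a variance bound obtained either from FKG applied to the increasing events $\{x\rightsquigarrow \infty\}$ or from the finite-range dependence coming from block renormalization; concentration then closes the argument via Chebyshev.

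Next I would invoke the shape theorem for supercritical oriented percolation (Durrett for $d=2$; Bezuidenhout--Grimmett block renormalization for $d\ge 3$): for each $x\in \mathcal P$, the cluster section $\xi_x:=\{y\in H_{nd}: x\rightsquigarrow y\}$ contains, with probability approaching $1$, a positive density of vertices inside a transverse ball of radius $\alpha(p)\cdot nd\gg \delta n$ around $x+ne^*$, for some $\alpha(p)>0$. Since the transverse spread of $\bigcup_{x\in \mathcal P}\xi_x$ then covers (with positive density) the full transverse range of $B$, a union bound combining this intersection event with the first step yields $\mathbb P(A\rightsquigarrow B)\ge 1-\epsilon$ for $n$ large.

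The main obstacle is invoking the shape theorem in $d\ge 3$. Only the weak form of linear transverse spread (no explicit speed, no explicit shape) is needed, and this is robust and well-documented via the Bezuidenhout--Grimmett renormalization machinery; still, stating it cleanly is nontrivial. A more self-contained alternative would be to first reduce to a two-dimensional slab problem by a block construction, so that only Durrett's planar shape theorem is required; this tradeoff loses quantitative sharpness but avoids the higher-dimensional renormalization.
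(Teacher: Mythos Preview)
Your strategy is essentially the paper's—both rest on the supercritical shape/coupling picture for oriented percolation—but your second step has a real gap. Knowing that $\xi_x$ ``contains a positive density of vertices inside a transverse ball of radius $\alpha(p)\,nd$'' does \emph{not} imply that $\xi_x$ meets the much smaller target $B$, whose transverse diameter is only $O(\delta n)$: a set of positive global density in a large ball can avoid any prescribed small subregion. What is actually needed is the \emph{coupling} (complete convergence) form of the shape theorem: conditionally on survival from $x$, there is a linearly growing region $K_n\supset B$ on which $\xi^{\{x\}}_n=\xi^{H_0}_n$. One must then still show $\xi^{H_0}_n\cap B\neq\emptyset$, i.e.\ $H_0\rightsquigarrow B$, which is a separate statement your density heuristic does not supply.

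The paper closes both points cleanly. It quotes the coupling region $K_s^{\{0\}}$ from the Hartarsky--Szab\'o shape theorem (valid in all $d\ge 2$, so no reduction to planar slabs is needed), giving $K_n^{\{z\}}\supset A$ whenever $z\in A$ survives. For the remaining ingredient $H_0\rightsquigarrow A^*$ it uses orientation-flip duality: this event has the same probability as $A\rightsquigarrow H_n$, which is $\ge 1-e^{-c|A|}$ by the standard survival estimate. This reversibility trick is the key idea your sketch is missing. As a minor aside, in your first step FKG gives \emph{positive} correlation of the increasing events $\{x\rightsquigarrow\infty\}$, hence a \emph{lower} bound on $\mathrm{Var}\,|\mathcal P|$—the wrong direction for Chebyshev; your block-renormalization alternative (or simply the survival bound $P(\mathcal P=\emptyset)\le e^{-c|A|}$, which is all that is required) is the correct route.
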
 
\begin{proof}
It is convenient to consider oriented percolation with parameter $p>p_c^{o,\star}$ in the half space $E=\cup_{n=0}^\infty H_n$. Given the hyperplane $\mathcal H_0=\{x\in \mathbb R^d:\ \sum_{i=1}^d x_i=0\}$ let $S=
\{z\in \mathcal H_0: \ \exists t>0 \text{ such that } z+t e^*\in E\}$. For any $z\in S$ and $A\subset H_0$ let also
\begin{align*}
\tau^A_z &=\min\{t>0:\ z+t e^*\in E \text{ and } A\rightsquigarrow z + t e^* \},\\
    I_t^A &=\{z\in S:\ \tau^A_z\le t\}, \\
    \xi_t^A &=\{z\in S: \ z+te^*\in E \text{ and  }A\rightsquigarrow  z + t e^*\},\\
    K_t^A & =\{z\in S:\ \mathbf 1_{\{z\in \xi_t^A\}}= \mathbf 1_{\{z\in \xi_t^{H_0}\}}\}.   
\end{align*}
The main ingredient for the proof of Lemma \ref{lem:high d} is the following result \cite[Theorems 4.3 and 4.9]{Ivailo-Reka}.\footnote{The proof given in \cite{Ivailo-Reka} is spelled out for generalized \emph{site} oriented percolation but it applies as well to bond percolation and to the contact process.} 
\begin{theorem}
For every $p>p_c^{o,\star}$ there exists a convex compact set $U\subset \mathcal H_0$ containing the origin in its interior such that, for every $\delta\in (0,1)$ there exists $c,C>0$ such that for any $s>0$   
\begin{equation}
\label{eq:10}
P\big(I_s^{\{0\}}\cap K^{\{0\}}_s\supseteq ((1-\delta)s U)\cap S \ |\ \xi_s^{\{0\}}\neq \emptyset\big) \ge 1-Ce^{-c s}
\end{equation}
and
\begin{equation}
\label{eq:6bis}
P\big(\exists\, s\in \mathbb N:\ \xi_s^A=\emptyset\big)\le e^{-c |A|}.
\end{equation}
    
\end{theorem} 
\begin{remark}
The fact that the set $U$ contains the origin is a consequence of the symmetry of our model around the direction $(1,1,\dots,1)$. For more general models of oriented percolation $U$ is a convex compact set with non empty interior.  
\end{remark}
Fix now $\delta>0$ small enough together with $n\in \mathbb N$ and let $A=H_0\cap [-\delta n,\delta n]^d\text{ and } A^*=A+n e^*.$ Since the origin is contained in the interior of $U$, if  $n$ is large enough and for any $z\in A$  \begin{equation}
    \label{eq:3}
    \{(1-\delta)n U +z\}\cap S\supset A.
\end{equation}

Using \eqref{eq:6bis} together with the reversibility of our oriented percolation model under global flip of the edge orientation 
\[
P\big(\{H_0\rightsquigarrow A^*\}\cap \{A\rightsquigarrow H_n\}\big)\ge 1-O(e^{-c\delta n}).
\]
Hence
\begin{align*}
        P(A\centernot \rightsquigarrow A^*)
        &\leq P\big(\{H_0\centernot \rightsquigarrow A^*\}\cup \{A\centernot \rightsquigarrow H_n\}\big)
        % \\ 
        % &\hskip 0.4cm 
        +P\big(\cup_{z,z'\in A}\{ \{\xi_n^z\neq \emptyset\} \cap
        \{z'\notin \xi_n^z\}\cap\{z'\in \xi_n^{H_0}\}\}\big)\\
        &\le O(e^{-c \delta n}) + O(e^{-c n}),
        \end{align*}
where we used \eqref{eq:3} and  \eqref{eq:10} to bound the second term in the r.h.s. above.
\end{proof}

%%%%%%%%%%%%%%%%%%%%%%%%%%%%%%%%%%%%%%%%%%%%%%%%%%%%%%%%%%%%%%%%%%%
%%                                                               %%
%% Supplementary Material, if any, should be provided in         %%
%% {supplement} environment  with title and short description.   %%
%%                                                               %%
%%%%%%%%%%%%%%%%%%%%%%%%%%%%%%%%%%%%%%%%%%%%%%%%%%%%%%%%%%%%%%%%%%%

% \begin{supplement}
% \stitle{Title of Supplement A.}
% \sdescription{Short description of Supplement A.}
% \end{supplement}
% \begin{supplement}
% \stitle{Title of Supplement B.}
% \sdescription{Short description of Supplement B.}
% \end{supplement}

%%%%%%%%%%%%%%%%%%%%%%%%%%%%%%%%%%%%%%%%%%%%%%%%%%%%%%%%%%%%%%%%%%%
%%                                                               %%
%% Use the two commands below for producing your bibliography    %%
%% with bibtex, then comment again the commands and include the  %%
%% content of the .bbl file in this file below the commands.     %%
%%                                                               %%
%%%%%%%%%%%%%%%%%%%%%%%%%%%%%%%%%%%%%%%%%%%%%%%%%%%%%%%%%%%%%%%%%%%

%\bibliographystyle{amsplain}
%\bibliography{yourbibfilename}

% add below the content of your .bbl file produced by bibtex.

\end{document}